\documentclass[11pt]{article}

\title{\vskip-1.0em\sc Singly generated operator algebras satisfying weakened versions of amenability}
\author{Yemon Choi}
\date{2nd July 2012}

\usepackage{sectsty}
\allsectionsfont{\sc}

\input{bac-oa_mac2b}

\begin{document}
\maketitle

\begin{abstract}
We construct a singly generated subalgebra of $\Cpct(\cH)$ which is non-amenable, yet is boundedly approximately contractible. The example embeds into a homogeneous von Neumann algebra.
We also observe that there are singly generated, biflat subalgebras of finite Type I von Neumann algebras, which are not amenable (and hence are not isomorphic to $\Cst$-algebras).
Such an example can be used to show that a certain extension property for commutative operator algebras, which is shown in \cite{YC_acoaf} to follow from amenability, does not necessarily imply amenability.

\medskip\noindent
MSC 2010: 47L75 (primary); 46J40 (secondary)
\end{abstract}

\begin{section}{Introduction}
In his pioneering 1972 monograph \cite{BEJ_CIBA}, which formally introduced the notion of amenability for Banach algebras, Johnson observed that commutative $\Cst$-algebras are amenable.
The problem of characterizing the commutative, amenable operator algebras -- that is, the norm-closed subalgebras of $\Bdd(\cH)$, $\cH$ a Hilbert space, which are amenable as Banach algebras -- remains unsolved, even in the commutative case. There are some partial results which suggest that every commutative, amenable operator algebra is isomorphic as a Banach algebra to a commutative $\Cst$-algebra: see recent work of Marcoux~\cite{Marcoux_JLMS08} and the present author~\cite{YC_acoaf}.

Like many other concepts which have been much studied, amenability has spawned several weaker versions, some studied in more depth than others. In trying to assess the relative worth, importance or interest of these notions, there are two natural criteria to consider. Firstly, if a Banach algebra satisfies one of these weaker versions of amenability, does that tell us something about its internal structure (e.g.~properties of its closed ideals) or about the behaviour of modules/representations of the algebra?
Secondly, is there a good supply of examples which \emph{are non-amenable} yet satisfy the weaker version?

It is this second question, in the context of operator algebras, which motivates the present note.
 We consider two of these weaker versions of amenability, namely bounded approximate contractibility and biflatness, in the context of commutative operator algebras. In each case, we construct an explicit example which satisfies that property, and is singly generated and semisimple, but is non-amenable; the \bac\ example is contained in $\Cpct(\cH)$.

Our examples can be embedded inside finite von Neumann algebras of \typone\ (more precisely, inside countable products of matrix algebras).
 This shows that in the results of \cite{YC_acoaf}, one cannot replace amenability with biflatness, nor with bounded approximate contractibility.
 In passing, we take the opportunity to show that certain technical arguments from~\cite{YC_acoaf} have rather sensitive hypotheses.
We remark that these examples therefore embed inside \typtwoone\ algebras, since any finite Type~I algebra with separable predual can be embedded into the hyperfinite $\twoone$ factor (J.~Peterson, personal communication, \cite{MO85878}).

\end{section}

\begin{section}{General preliminaries}\label{s:prelim}
We refer the reader to \cite{BonsDunc} for basics on Banach algebras, and to \cite[\S43]{BonsDunc} in particular for the basic definitions and characterizations of amenable Banach algebras.

The examples we will construct in this article are each generated by a countable set of pairwise-orthogonal idempotents. We note that this ensures they are singly generated as Banach algebras; this is a general fact, which is not restricted to the setting of subalgebras of $\Bdd(\cH)$.

\begin{lem}\label{l:singly-generated}
Let $A$ be a commutative Banach algebra generated by a countable set of pairwise-orthogonal idempotents. Then $A$ is singly generated as a Banach algebra.
\end{lem}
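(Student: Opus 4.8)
The plan is to exhibit a single generator of the form $a=\sum_{n}\lambda_n e_n$, where $(e_n)_{n\ge1}$ enumerates the given idempotents (discarding any that are zero, and treating the finite case, which is only easier, by the same device) and the scalars $\lambda_n$ are chosen by hand. Writing $w_n\defeq\norm{e_n}$, I would pick the $\lambda_n$ to be strictly positive, strictly decreasing to $0$, and small enough that $\sum_n w_n\lambda_n<\infty$; this is arranged recursively, taking $\lambda_1=1$ and then $0<\lambda_n<\lambda_{n-1}$ with $w_n\lambda_n<2^{-n}$. Then $a=\sum_n\lambda_n e_n$ converges absolutely and lies in $A$. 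Let $B$ denote the closed subalgebra of $A$ generated by $a$, so that $B=\clin\{a^k\st k\ge1\}$ and $B\subseteq A$ automatically. Since $A$ is the smallest closed subalgebra containing every $e_n$, it suffices to prove $e_m\in B$ for each $m$: this forces $A\subseteq B$ and hence $A=B$, i.e.\ $a$ generates $A$.

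The engine of the argument is the orthogonality identity $a^k=\sum_n\lambda_n^k e_n$ (valid for the infinite sum by continuity of multiplication, the right-hand series converging absolutely since $\lambda_n\le\lambda_1$), which lets me peel off the idempotents one at a time. I would argue by induction on $m$. Suppose $e_1,\dots,e_{m-1}\in B$; then
\[
a_m\defeq a-\sum_{i=1}^{m-1}\lambda_i e_i=\sum_{n\ge m}\lambda_n e_n
\]
also lies in $B$, and by the same orthogonality computation $\lambda_m^{-j}a_m^{\,j}=\sum_{n\ge m}(\lambda_n/\lambda_m)^j e_n$ for every $j\ge1$. Since $\lambda_m$ is now the largest of the surviving coefficients, I expect this to converge to $e_m$ as $j\to\infty$, which places $e_m$ in the closed subalgebra $B$ and completes the induction.

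The main obstacle, and the only genuinely delicate point, is justifying that last limit: I must control the weighted tail
\[
\Norm{\lambda_m^{-j}a_m^{\,j}-e_m}\le\sum_{n>m}\Bigl(\frac{\lambda_n}{\lambda_m}\Bigr)^{j}w_n .
\]
Uniform (spectral) smallness of the coefficients is \emph{not} enough here, because the weights $w_n=\norm{e_n}$ need not be bounded; this is precisely why the scalars cannot be chosen carelessly. The resolution is that $0<\lambda_n/\lambda_m<1$ for $n>m$, so $(\lambda_n/\lambda_m)^j\le\lambda_n/\lambda_m$ uniformly in $j\ge1$, and the dominating series $\sum_{n>m}(\lambda_n/\lambda_m)w_n=\lambda_m^{-1}\sum_{n>m}\lambda_n w_n$ is finite by the choice of $(\lambda_n)$. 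Each individual term tends to $0$ as $j\to\infty$, so the dominated convergence theorem (on $\Nat$ with counting measure) gives the claimed convergence. The strict monotonicity of the moduli together with their summability against the weights are exactly what make the peeling-off step legitimate; everything else is routine.
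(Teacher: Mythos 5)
Your proposal is correct and is essentially the paper's own argument: the same weighted generator $\sum_n\lambda_n e_n$ with $\lambda_n$ strictly decreasing and $\sum_n\lambda_n\norm{e_n}<\infty$, the same inductive peeling-off of $e_1,\dots,e_{m-1}$, and the same limit $e_m=\lim_j(\lambda_m^{-1}a_m)^j$. The only (immaterial) difference is that you justify the final limit by dominated convergence, whereas the paper extracts an explicit geometric bound $(\lambda_{m+1}/\lambda_m)^{j-1}$ on the tail.
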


This is presumably a standard result, but for convenience, we provide a full proof, inspired by a similar argument in \cite[Proposition 1.1]{AzSh_JOT93} for certain subalgebras of $\Bdd(\cH)$.

\begin{proof}
Enumerate the given set of idempotents as $(e_j)_{j\geq 1}$, and fix a strictly decreasing sequence of strictly positive reals $\lm_1>\lm_2>\dots$ with the property that $\sum_{j\geq 1} \lm_j \norm{e_j}<\infty$.
Set $b=\sum_{j\geq 1} \lm_j e_j$, and let $B$ be the norm-closed subalgebra of $A$ generated by~$b$.
We claim that $e_n\in B$ for all $n\geq 1$, which will clearly imply $B=A$ since $A$ is generated by the set $\{e_j\st j\in\Nat\}$.

The proof is by strong induction on $n$. We start by noting that for all $r\in\Nat$, we have $b^r = \sum_{j\geq 1} \lm_j^r e_j$,
 the sum converging absolutely. As $0< \lm_j \leq \lm_2 < \lm_1$ for all $j\geq 2$, it follows that
\[ \norm{e_1 - (\lm_1^{-1}b)^r} \leq \sum_{j\geq 2} \left( \frac{\lm_j}{\lm_1}\right)^r\norm{e_j} \leq \frac{1}{\lm_1} \left(\frac{\lm_2}{\lm_1}\right)^{r-1}\sum_{j\geq 2} \lm_j \norm{e_j} \to 0 \quad\text{as $r\to \infty$.} \]
Thus $e_1 \in B$, and so the claim holds for $n=1$.

Now suppose the claim holds for all $n\in \{1,\dots, m-1\}$ for some $m\geq 2$. Let
\[ b_m = b- \left(\sum_{j=1}^{m-1} \lm_j e_j\right) = \sum_{j\geq m} \lm_je_j;\]
by the inductive hypothesis, $b_m \in B$. For all $r\in\Nat$ we have $b_m^r = \sum_{j\geq m} \lm_j^r e_j$,
the sum converging absolutely. As $0< \lm_j \leq \lm_{m+1} < \lm_m$ for all $j\geq m+1$, a similar argument to the one used above shows that $e_m= \lim_r (\lm_m^{-1}b_m)^r \in B$. So the claim holds for $n=m$, and this completes the inductive step.
\end{proof}


If $\fA$ is a Banach algebra, isomorphic to a $\Cst$-algebra, then the set of central idempotents in $\fA$ will be a bounded subset of $\fA$. (This is because the centre of a $\Cst$-algebra is itself a commutative $\Cst$-algebra, and hence all its non-zero idempotent elements have norm~$1$.) In particular, a commutative operator algebra which is isomorphic to a $\Cst$-algebra will have uniformly bounded idempotents. Gifford observed that the same will be true for any amenable, commutative operator algebra. More precisely, we have the following result (in whose statement the prime denotes the commutant of a subset of $\Bdd(\cH)$).

\begin{thm}[Gifford, \cite{Gifford}]\label{t:Gifford-trick}
If $\fA$ is an amenable operator algebra, then the set of idempotents in $\fA''\cap\fA'$ is norm-bounded.
\end{thm}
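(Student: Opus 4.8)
\noindent
The plan is to extract, from a bounded approximate diagonal for $\fA$, a single operator which coincides with the given idempotent but whose norm is visibly controlled by the amenability constant. Fix an idempotent $e\in\fA''\cap\fA'$. Since $e$ is a bounded idempotent, $\ran(e)$ is closed; let $P\in\Bdd(\cH)$ be the orthogonal projection onto $\ran(e)$, so that $\norm{P}\le 1$. A direct check gives the two identities $eP=P$ and $Pe=e$, which record that $P$ and $e$ have the same range. I would first pass to the non-degenerate part, compressing to $\overline{\fA\cH}$, so that any weak$^*$ limit of an approximate identity for $\fA$ acts as $\id$ on $\cH$.

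By amenability, choose an approximate diagonal $(d_i)$ in $\fA\ptp\fA$ with $\sup_i\norm{d_i}\le C$, such that $\pi(d_i)$ is a two-sided approximate identity and $a\cdot d_i-d_i\cdot a\to 0$ for each $a\in\fA$, where $\pi$ denotes the product map. Write $d_i=\sum_k a^i_k\otimes b^i_k$. The engine of the argument is the contraction $\Phi\colon\fA\ptp\fA\to\Bdd(\cH)$ determined by $\Phi(a\otimes b)=aPb$; since $\norm{P}\le 1$ one has $\norm{\Phi}\le 1$. Using that $\Bdd(\cH)$ is a dual space, I set $T:=\wstar\text{-}\lim_i\Phi(d_i)$, whence $\norm{T}\le C$ immediately, by weak$^*$ lower semicontinuity of the norm.

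It then remains to identify $T$ with $e$. First, $T\in\fA'$: applying the contraction $\Phi$ to $a\cdot d_i-d_i\cdot a$ and passing to the limit gives $aT-Ta=0$ for all $a\in\fA$. Second, the identity $eP=P$ yields $a^i_kPb^i_k=ea^i_kPb^i_k$, so that $\ran(T)\subseteq\ran(e)$ and $eT=T$. The decisive step invokes the hypothesis $e\in\fA''=(\fA')'$: since $T\in\fA'$, the element $e$ commutes with $T$, and therefore $Te=eT=T$ as well. Thus, in the oblique splitting $\cH=\ran(e)\oplus\ker(e)$ induced by $e$, the operator $T$ is supported on the $\ran(e)\to\ran(e)$ corner, exactly where $e$ itself sits as the identity. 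On the other hand, $e-T=\wstar\text{-}\lim_i\sum_k a^i_k(e-P)b^i_k$, and the relations $e(e-P)=e-P$ and $(e-P)e=0$ show that $e-P$ is supported on the off-diagonal corner $\ker(e)\to\ran(e)$; because each $a^i_k,b^i_k$ commutes with $e$ and is hence block-diagonal for this splitting, every summand, and so the limit $e-T$, stays off-diagonal. Being simultaneously diagonal and off-diagonal, $e-T$ vanishes, giving $T=e$ and therefore $\norm{e}=\norm{T}\le C$.

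The main obstacle is precisely this identification $T=e$, together with making the support bookkeeping rigorous in the non-orthogonal decomposition. It is here that centrality of $e$ in $\fA''$ (equivalently $e\in(\fA')'$, so that $e$ commutes with the average $T\in\fA'$) is indispensable: without it one only obtains $\ran(T)\subseteq\ran(e)$, the operator $T$ collapses to the orthogonal projection $P$ rather than to $e$, and no control on $\norm{e}$ is extracted. The remaining care is routine: justifying the non-degeneracy reduction, and the weak$^*$ limits defining $T$ and the various module manipulations.
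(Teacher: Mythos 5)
Your argument is essentially the proof in the paper: both average the orthogonal projection $P$ onto $\ran(e)$ against a bounded approximate diagonal, exploit $eP=P$ and $Pe=e$ together with the fact that the resulting operator $T$ lies in $\fA'$ while $e$ lies in $\fA''$, and conclude $T=e$ with $\norm{T}$ controlled by the bound on the diagonal. Your endgame (the corner/support argument giving $(e-T)e=0$, hence $e=Te=T$) is just an unrolled version of the paper's chain $E(p)=E(ep)=eE(p)=E(p)e=E(pe)=E(e)=e$.

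The one step I would not wave through is your reduction to the non-degenerate part. You need $\wstar\text{-}\lim_i\sum_k a^i_k e b^i_k=e$, i.e.\ $e\,\pi(d_i)\to e$, and compressing to $\cH_0=\overline{\fA\cH}$ does not obviously deliver the theorem for the \emph{original} $e$ on the \emph{original} $\cH$: for a non-self-adjoint $\fA$ the orthogonal projection onto $\cH_0$ need not lie in $\fA'$, the commutant and bicommutant change under the compression, and---most importantly---you would only bound $\norm{e\vert_{\cH_0}}$, which can be strictly smaller than $\norm{e}$. The paper's fix is simpler and airtight: replace $\fA$ by its unitization inside $\Bdd(\cH)$ (still amenable, with the same commutant and bicommutant, hence the same idempotents to bound), so that $\pi(d_i)\to I$ in norm and $e\,\pi(d_i)\to e$ with no further ado. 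With that substitution your proof is complete.
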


Gifford actually proved something more general, namely that the conclusion of Theorem~\ref{t:Gifford-trick} holds whenever $\fA$ is an operator algebra with the so-called \dt{total reduction property}. Since we shall not discuss the total reduction property in this article, we refer the reader to \cite{Gifford} for further details, and provide a more direct proof of Theorem~\ref{t:Gifford-trick} as follows.

\begin{proof}
Regard $\fA$ as a closed subalgebra of some $\Bdd(\cH)$. Since the unitization of an amenable Banach algebra is amenable, we may assume without loss of generality that $\fA$ contains the identity operator $I$.

Let $(\Delta_\al)$ be a bounded approximate diagonal for $\fA$. Define $T: \fA\ptp\fA \to \Bdd(\Bdd(\cH))$ by $T(c\tp d)(x) = cxd$, and let $E$ be a point-to-weak$^*$ cluster point of the net $T(\Delta_\al)\subset \Bdd(\Bdd(\cH))$. As $(\Delta_\al)$ is a bounded approximate diagonal for $\fA$, it follows from routine estimates and convergence arguments that the following properties hold:
\begin{YCnum}
\item \label{li:into-A'}
$E(x) \in\fA'$ for all $x\in\Bdd(\cH)$;
\item \label{li:fixing-A'}
$E(u)=u$ for all $u\in\fA'$\/;
\item \label{li:A'-bimodule}
$E(uxv) = uE(x)v$ for all $u,v\in \fA'$ and all $x\in \Bdd(\cH)$.
\end{YCnum}

Let $e\in \fA''\cap\fA'$ be an idempotent, and let $p$ be the orthogonal projection from $\cH$ onto the closed subspace $e\cH$. We have $ep=p$ and $pe=e$. Therefore,
\[ \begin{aligned}
E(p) & = E(ep) \\
 & = e E(p) \quad\text{(by \ref{li:A'-bimodule} and $e\in\fA'$)} \\
 & = E(p) e \quad\text{(by \ref{li:into-A'} and $e\in\fA''$)} \\
 & = E(pe) \quad\text{(by \ref{li:A'-bimodule} and $e\in\fA'$)} \\
 & = E(e) \\
 & = e  \quad\text{(by \ref{li:fixing-A'} and $e\in \fA'$.}
\end{aligned} \]
In particular, $\norm{e}=\norm{E(p)} \leq\norm{E}$. Since $E$ is independent of the choice of $e$, we are done.
\end{proof}

\begin{rem}
The proof just given requires little background from the theory of amenability, but may seem somewhat unmotivated. For a more conceptual but less self-contained approach, the reader is encouraged to consult Gifford's original article \cite{Gifford}.
\end{rem}

\begin{cor}\label{c:unbdd-idem}
Let $\fA$ be a commutative operator algebra which contains an unbounded family of idempotents. Then $\fA$ is not amenable.
\end{cor}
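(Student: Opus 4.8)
The plan is to deduce this immediately from Theorem~\ref{t:Gifford-trick} by a contrapositive argument. Realize $\fA$ as a norm-closed subalgebra of some $\Bdd(\cH)$. The key observation is that, because $\fA$ is commutative, every element of $\fA$ commutes with every element of $\fA$; hence $\fA\subseteq\fA'$. Combining this with the always-valid inclusion $\fA\subseteq\fA''$ gives $\fA\subseteq\fA''\cap\fA'$.

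First I would record the consequence that every idempotent of $\fA$ is in particular an idempotent of $\fA''\cap\fA'$. Next, arguing by contraposition, I would suppose that $\fA$ is amenable. Then Theorem~\ref{t:Gifford-trick} forces the set of idempotents in $\fA''\cap\fA'$ to be norm-bounded, and so, \emph{a fortiori}, the set of idempotents lying in $\fA$ is norm-bounded. This contradicts the hypothesis that $\fA$ contains an unbounded family of idempotents, and therefore $\fA$ cannot be amenable.

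I do not expect any genuine obstacle here: the only point needing verification is the inclusion $\fA\subseteq\fA'$, which is immediate from commutativity, and everything else is a formal application of Gifford's theorem. The content of the corollary is really just the translation of Theorem~\ref{t:Gifford-trick} into the commutative setting, where the hypothesis $e\in\fA''\cap\fA'$ is automatically satisfied by every idempotent of $\fA$ itself.
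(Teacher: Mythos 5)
Your argument is correct and is exactly the intended deduction: the paper states the corollary without proof immediately after Theorem~\ref{t:Gifford-trick}, relying on precisely the observation that commutativity gives $\fA\subseteq\fA'$, so that together with $\fA\subseteq\fA''$ every idempotent of $\fA$ lies in $\fA''\cap\fA'$ and Gifford's theorem applies. Nothing is missing.
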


\end{section}

\begin{section}{Bounded approximate contractibility}
In \cite{GhL_genam1}, Ghahramani and Loy began the systematic study of certain ``approximate'' versions of amenability. These have since been pursued in various directions by several different groups of authors; see \cite{Zhang_survey} for an overview of some of the results to date.

In this article, we are only concerned with one such variant, which we now briefly describe. Given a Banach algebra $A$ and a Banach $A$-bimodule $X$ and $\xi\in X$, we denote by $\ad{\xi}$ the inner derivation $a\mapsto a\cdot\xi-\xi\cdot a$.

\begin{dfn}
A Banach algebra $A$ is \dt{\bac} if for each Banach $A$-bimodule $X$ and each continuous derivation $D:A\to X$, there exists a net $(\xi_i)\subset X$, not necessarily bounded, such that the net $(\ad{\xi_i})$ is norm bounded (as a subset of $\Bdd(A,X)$) and converges in the strong operator topology of $\Bdd(A,X)$ to~$D$.
\end{dfn}

Note that since there are no non-zero, inner derivations from a Banach algebra $A$ into a symmetric $A$-bimodule, every \emph{commutative}, \bac\ Banach algebra is weakly amenable.

\begin{dfn}\label{d:mbad}
Let $A$ be a Banach algebra, and let $\pi:A\ptp A \to A$ denote the linearized multiplication map. A \dt{\mbad} for $A$ is a net $(M_i)\subset A\ptp A$ such that
\begin{YCnum}
\item for each $a\in A$, $\lim_i \norm{a\pi(\Delta_i)-a} =0$;
\item for each $a\in A$, $\lim_i \norm{a\cdot \Delta_i-\Delta_i\cdot a} = 0$;
\item there exists a constant $C>0$ such that $\sup_i \norm{a\cdot \Delta_i-\Delta_i\cdot a} \leq C\norm{a}$ for all $a\in A$.
\end{YCnum}
\end{dfn}

\begin{lem}\label{l:finding-bac}
Let $A$ be a Banach algebra. Suppose there exists a net $(\Delta_i)\subseteq A\tp A$ with the following properties:
\begin{itemize}
\item[{\rm(a)}] $\pi(\Delta_i)$ is a \emph{central, bounded} approximate identity for $A$;
\item[{\rm(b)}] $(\Delta_i)$ is a \mbad\ for $A$.
\end{itemize}
Then $A$ is \bac.
\end{lem}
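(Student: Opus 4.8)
The plan is, for an arbitrary Banach $A$-bimodule $X$ and continuous derivation $D\colon A\to X$, to manufacture from $(\Delta_i)$ a net of inner derivations whose uniform boundedness and strong convergence to $D$ can be read off directly from the defining properties of a \mbad\ together with~(a). The one point that needs genuine care is that the obvious construction does \emph{not} converge to $D$ on a general bimodule, so I would first reduce to the case in which $X$ is neo-unital.

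Granting that reduction, the construction is the standard one. Let $\widehat D\colon A\ptp A\to X$ be the bounded linear map with $\widehat D(a\tp b)=a\cdot D(b)$, so that $\norm{\widehat D}\le\norm{D}$, and put $\xi_i=\widehat D(\Delta_i)$. Expanding $\ad{\xi_i}(a)=a\cdot\xi_i-\xi_i\cdot a$ and using only that $D$ is a derivation gives the identity
\[ \ad{\xi_i}(a)=\widehat D\bigl(a\cdot\Delta_i-\Delta_i\cdot a\bigr)+\pi(\Delta_i)\cdot D(a)\qquad(a\in A). \]
The uniform bound is then immediate: property~(iii) of Definition~\ref{d:mbad} bounds the first term by $C\norm{D}\,\norm{a}$, and the norm-boundedness of the central approximate identity $(\pi(\Delta_i))$ from~(a) bounds the second, so that $\sup_i\norm{\ad{\xi_i}}<\infty$. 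For the strong convergence, property~(ii) sends the first term to $0$ in norm for each fixed $a$, while on a neo-unital module the Cohen--Hewitt factorization theorem ensures that the bounded approximate identity $(\pi(\Delta_i))$ acts as a genuine approximate identity, so that $\pi(\Delta_i)\cdot D(a)\to D(a)$. Hence $\ad{\xi_i}(a)\to D(a)$, as required.

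The main obstacle is the preliminary reduction to neo-unital bimodules. It is genuinely needed rather than cosmetic: if $X$ is not neo-unital then $\pi(\Delta_i)\cdot D(a)$ need not converge to $D(a)$ at all (one already sees this for derivations out of $c_0$ into a bimodule that is not neo-unital), so the displayed net can fail to approximate $D$ before the reduction is made. I would carry out the reduction using that $A$ has a \emph{bounded} approximate identity --- this is what licenses Cohen--Hewitt factorization and hence the passage to the essential submodule --- and I expect centrality of $(\pi(\Delta_i))$ to be what keeps the construction symmetric enough to survive the reduction: note that centrality is exactly what makes each map $a\mapsto\pi(\Delta_i)\cdot D(a)$ a derivation, and forces every commutator $a\cdot\Delta_i-\Delta_i\cdot a$ to lie in $\ker\pi$. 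Verifying that the reduction is compatible with the \emph{uniform} bound coming from~(iii) is the delicate step I would write out most carefully; the remaining estimates are routine.
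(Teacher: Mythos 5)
Your central computation is correct and is indeed the engine of every proof of this type: with $\xi_i=\widehat D(\Delta_i)$ one gets $\ad{\xi_i}(a)=\widehat D(a\cdot\Delta_i-\Delta_i\cdot a)+\pi(\Delta_i)\cdot D(a)$, condition (iii) of Definition~\ref{d:mbad} gives the uniform bound, and condition (ii) kills the first term. The gap is the step you defer. For bounded approximate contractibility the test modules are arbitrary Banach bimodules, not dual ones, and the Cohen--Hewitt route you invoke is the standard reduction for \emph{dual} modules (i.e.\ for amenability): there one restricts a derivation $A\to X^*$ to the dual of the essential submodule and disposes of the remaining part using a weak-$*$ cluster point of the bounded net $D(u_\alpha)$. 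For a general bimodule $X$ there is no bounded projection onto $\overline{A\cdot X\cdot A}$ and no weak-$*$ compactness to absorb the non-essential part, so the ``reduction to neo-unital bimodules'' is not a routine preliminary --- it is exactly where the difficulty of the lemma lives, and ``I would carry out the reduction using that $A$ has a bounded approximate identity'' does not yet constitute a proof. (You also correctly flag that the reduction must preserve the uniform bound in $i$, but do not address how.)

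The paper takes a different and shorter route that packages this reduction once and for all: by \cite[Proposition 2.2]{CGZ_JFA09} it suffices to produce a \mbad\ for the \emph{unitization} $\fu{A}$. For a unital algebra the passage to unital bimodules is the elementary corner decomposition, valid for arbitrary (not just dual) modules, and on a unital module your own identity immediately gives $\ad{\xi_i}(a)=\widehat D(a\cdot M_i-M_i\cdot a)+D(a)$. The real content of the paper's proof is then the explicit formula $M_i=2\Delta_i-u_i\cdot\Delta_i+(\id-u_i)\tp(\id-u_i)$, with $u_i=\pi(\Delta_i)$, together with the verification --- using the centrality and boundedness of $(u_i)$ --- that $(M_i)$ is a \mbad\ for $\fu{A}$ satisfying $\fu{\pi}(M_i)=\id$. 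That construction, which is where hypotheses (a) and (b) are actually consumed, does not appear in your proposal. The cleanest fix is to adopt it: extend $D$ to $\fu{A}$, regard $X$ as a unital $\fu{A}$-bimodule, and run your computation with $M_i$ in place of $\Delta_i$, rather than attempting a neo-unital reduction over $A$ itself.
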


 (One can weaken condition (a), but the proof becomes technically more demanding, and the version given here will be enough for our purposes.)

\begin{proof}
As observed in \cite[Proposition 2.2]{CGZ_JFA09}, it suffices to show that the unitization $\fu{A}$ has a \mbad. Let $u_i=\pi(\Delta_i)$, and define
\[ M_i = 2\Delta_i - u_i\cdot\Delta_i + (\id-u_i)\tp(\id-u_i). \]
We claim that $(M_i)$ is a \mbad\ for $\fu{A}$.

Firstly, let $\fu{\pi}:\fu{A}\ptp \fu{A} \to \fu{A}$ be the product map, which clearly satisfies $\fu{\pi}\vert_{A\ptp A} = \pi$. Then, using the identity $\pi(u\cdot x)=u\pi(x)$ for all $u\in A$ and $x\in A\ptp A$, a direct calculation yields $\fu{\pi}(M_i)=\id$ for all~$i$. So condition~(i) of Definition~\ref{d:mbad} is satisfied.

Secondly, let $K=\sup_i \norm{u_i} < \infty$, and observe that as $(u_i)$ lies in the centre of $A$, we have for each $a\in A$
\[
\begin{aligned}
a\cdot M_i - M_i\cdot a
 & = 
2(a\cdot\Delta_i -\Delta_i\cdot a)  - u_i\cdot (a\cdot \Delta_i - \Delta_i\cdot a)  \\
& \quad + (a- au_i)\tp (\id-u_i) - (\id-u_i)\tp(a-au_i).
\end{aligned}
\]
Hence for each $a\in A$ and $\lm\in\Cplx$,
\[ \begin{aligned}
\norm{(a+\lm\id)\cdot M_i - M_i \cdot (a+\lm\id)}
 & =  \norm{a\cdot M_i - M_i \cdot a} \\
 & \leq (2+K)\norm{a\cdot\Delta_i-\Delta_i\cdot a} + 2(1+K)\norm{a-au_i}.
\end{aligned}
\]
Thus condition (ii) of Definition~\ref{d:mbad} is satisfied. Finally: by assumption, there exists $C>0$ such that $\norm{a\cdot\Delta_i-\Delta_i\cdot a}\leq C\norm{a}$ for all $a\in A$. Hence,
\[ \begin{aligned}
 \sup_i \norm{ (a+\lm\id)\cdot M_i - M_i \cdot (a+\lm\id) }
& \leq (2+K)C\norm{a}+2(1+K)^2\norm{a} \\
& \leq ((2+K)C + 2(1+K)^2)\ \norm{a+\lm \id}.
\end{aligned} \]
Thus condition (iii) of Definition~\ref{d:mbad} is satisfied.
This completes the proof of the claim, and hence of the lemma.
\end{proof}

To construct explicit examples, we use the following result. It is a slightly more abstract version of known results for Banach sequence algebras (cf.~\cite[Theorem 4.4]{GhLZ_genam2} or \cite[Corollary 3.5]{DLZ_St06}), and it extends \cite[Example 4.6]{GhLZ_genam2}, although our approach is slightly different from the argument there.

\begin{prop}\label{p:genam-eg}
Let $A$ be a Banach algebra, containing a sequence $(e_n)_{n\geq 1}$ with the following properties:
\begin{YCnum}
\item\label{li:absorb}
 $e_me_n = e_{\min(m,n)} = e_ne_m$ for all $m,n\in\Nat$;
\item\label{li:total}
 the set $\{ e_n \st n \in \Nat\}$ has dense linear span in $A$;
\item\label{li:BAI}
 there is a subsequence $n(1)<n(2)<\dots$ such that $(e_{n(j)})_{j\geq 1}$ is a bounded approximate identity for~$A$.
\end{YCnum}
Then $A$ is \bac.
\end{prop}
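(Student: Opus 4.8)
The plan is to build, directly from the idempotents $e_n$, a net $(\Delta_j)$ in $A\tp A$ meeting the two hypotheses of Lemma~\ref{l:finding-bac}, and then simply invoke that lemma. The first preliminary observation is that $A$ is commutative: by~\ref{li:absorb} the generators $e_n$ pairwise commute, by~\ref{li:total} their linear span is dense, and commutativity passes to the closure under the (jointly continuous) product. In particular every element of $A$ is central, so the word ``central'' in hypothesis~(a) of Lemma~\ref{l:finding-bac} costs nothing.

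Next I would replace the nested idempotents by orthogonal ones. Putting $e_0\defeq 0$ and $f_k\defeq e_k-e_{k-1}$, a direct check using~\ref{li:absorb} shows that $(f_k)_{k\geq 1}$ is a sequence of pairwise-orthogonal idempotents with $\sum_{k=1}^n f_k=e_n$; since $\lin\{e_n\}=\lin\{f_k\}$, property~\ref{li:total} gives that $\{f_k\}$ also has dense linear span. I would then set
\[ \Delta_j \defeq \sum_{k=1}^{n(j)} f_k\tp f_k \in A\tp A, \]
so that $\pi(\Delta_j)=\sum_{k=1}^{n(j)} f_k=e_{n(j)}$. By~\ref{li:BAI} together with the commutativity above, $(\pi(\Delta_j))_j$ is a central bounded approximate identity, which is precisely hypothesis~(a) of Lemma~\ref{l:finding-bac}.

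The substance of the argument lies in hypothesis~(b), that $(\Delta_j)$ is a \mbad. The key structural fact I would isolate and prove first is that \emph{$af_m$ is a scalar multiple of $f_m$ for every $a\in A$ and every $m$}. Indeed, commutativity and $f_m^2=f_m$ give $af_m=f_m a f_m\in f_m A f_m$, and approximating $a$ by elements of $\lin\{f_k\}$ and using orthogonality shows $f_m A f_m\subseteq\Cplx f_m$ (the containment survives the limit because $\Cplx f_m$ is finite-dimensional, hence closed). Granting this, write $af_k=f_ka=c_k f_k$; then $(af_k)\tp f_k=c_k\,(f_k\tp f_k)=f_k\tp(f_k a)$, so that
\[ a\cdot\Delta_j-\Delta_j\cdot a=\sum_{k=1}^{n(j)}\bigl[(af_k)\tp f_k-f_k\tp(f_k a)\bigr]=0 \]
for every $a\in A$ and every $j$. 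Conditions (ii) and (iii) of Definition~\ref{d:mbad} are then immediate (the commutator is identically zero), and condition~(i) amounts to $a e_{n(j)}\to a$, which holds since $(e_{n(j)})$ is an approximate identity. Hence $(\Delta_j)$ is a \mbad, hypothesis~(b) holds, and Lemma~\ref{l:finding-bac} yields that $A$ is \bac.

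I expect the only genuine content to be the scalar-multiple claim $af_m\in\Cplx f_m$: it is exactly this that forces the commutators to vanish \emph{identically} rather than merely in the limit, thereby trivializing conditions (ii) and (iii) at once. Everything else --- commutativity, the telescoping identity $\sum_{k\leq n}f_k=e_n$, and the approximate-identity estimate in condition~(i) --- is routine bookkeeping. The one place demanding a little care is the passage from $\lin\{f_k\}$ to all of $A$ in that claim, where one uses the closedness of $\Cplx f_m$ (and notes that the statement is vacuous when $f_m=0$).
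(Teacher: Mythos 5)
Your proposal is correct and follows essentially the same route as the paper: the same approximate diagonal $\Delta_j=\sum_{k\leq n(j)}(e_k-e_{k-1})\tp(e_k-e_{k-1})$, the same observation that the commutators $a\cdot\Delta_j-\Delta_j\cdot a$ vanish identically, and the same reduction to Lemma~\ref{l:finding-bac}. The only (harmless) difference is how the vanishing is verified: the paper checks it directly for $a=e_m$ and extends by linearity and density, whereas you first establish $af_m\in\Cplx f_m$ and deduce it for all $a$ at once.
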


\begin{proof}
This is similar to the proof of \cite[Theorem 6.1]{CGZ_JFA09}.
For $n\geq 2$, define
\[ \Delta_n = e_1 \tp e_1 + \sum_{j=2}^n (e_j-e_{j-1})\tp(e_j-e_{j-1}). \]
We then have the following identities:
\begin{align}
\label{eq:needed1}
\pi(\Delta_n) & =e_n \quad\text{for all $n$;} \\
\label{eq:needed2}
a\cdot\Delta_n & = \Delta_n\cdot a \quad\text{for all $n$ and all $a\in A$.}
\end{align}
The identity \eqref{eq:needed1} can be shown by direct calculation, using property~\ref{li:absorb}. The identity \eqref{eq:needed2} is true for $a=e_m$ ($m$ arbitrary); this is another direct calculation using~\ref{li:absorb}, which is most easily done by treating the cases $m\leq n$ and $m>n$ separately. Hence, by linearity and continuity (using property~\ref{li:total}), this identity holds for all $a\in A$, as claimed.

It follows immediately from \eqref{eq:needed1}, \eqref{eq:needed2} and property~\ref{li:BAI} that:
\begin{itemize}
\item[(a)] $\pi(\Delta_{n(j)})$ is a bounded approximate identity for~$A$; and
\item[(b)] the sequence $(\Delta_{n(j)})$ is a \mbad\ for $A$.
\end{itemize}
Therefore $A$ is \bac, by Lemma~\ref{l:finding-bac}.
\end{proof}

\begin{rem}
The proof of Proposition~\ref{p:genam-eg} shows slightly more, namely that $A$ is not only \bac, but also \dt{pseudo-contractible} in the sense of \cite{GhZ_pseudo}. (There are pseudo-contractible Banach algebras which are not \bac, and vice versa.)
\end{rem}

Now, fix a Hilbert space $\cH$ and a strictly ascending chain of non-zero subspaces
 $\cH_1 \subset \cH_2 \subset \cH_3 \subset \dots$; for each $n\in\Nat$, let
 $p_n$ be the orthogonal projection of $\cH$ onto $\cH_n$.
For each $k\in\Nat$, choose a bounded operator $b_{2k}\in \Bdd(\cH_{2k+1}\ominus\cH_{2k}, \cH_{2k}\ominus \cH_{2k-1})$, such that $\norm{b_{2k}} \to \infty$ as $k\to\infty$, and define a sequence $(e_n)_{n\geq 1} \subset\Bdd(\cH)$ by
\[ e_{2k-1}\defeq p_{2k-1} \quad\text{and}\quad e_{2k}\defeq p_{2k} + b_{2k} (p_{2k+1}-p_{2k}) \quad\text{for $k=1,2,\dots$} \]
Thus, in block matrix form,
\[
e_{2k-1} = \left[ \begin{matrix}
    I & 0 & 0 & 0 \\ 0 & 0 & 0 & 0 \\ 0 & 0 & 0 & 0 \\ 0 & 0 & 0 & 0 \\
  \end{matrix} \right]
\; \begin{matrix}
 \cH_{2k-1} \\ \cH_{2k}\ominus\cH_{2k-1} \\ \cH_{2k+1}\ominus \cH_{2k} \\ \cH\ominus\cH_{2k+1}
\end{matrix}
\quad\text{and}\quad
e_{2k} = \left[ \begin{matrix}
    I & 0 & 0 & 0 \\ 0 & I & b_{2k} & 0 \\ 0 & 0 & 0 & 0 \\ 0 & 0 & 0 & 0 \\
  \end{matrix} \right]
\; \begin{matrix} \cH_{2k-1} \\ \cH_{2k}\ominus\cH_{2k-1} \\
 \cH_{2k+1}\ominus \cH_{2k} \\ \cH\ominus\cH_{2k+1}
\end{matrix}
\]

It is easily checked that for each $n$, we have $e_n^2=e_n$ and
$e_ne_{n+1}=e_n=e_{n+1}e_n$ (consider the cases of odd and even $n$ separately).
The latter property implies, by induction, that $e_me_n=e_{\min(m,n)} = e_ne_m$ for all $m,n\in\Nat$. Since $\norm{e_{2k-1}} = 1$ for all $k$,
 we see that the algebra $\fA = \clin\{ e_n \st n\in\Nat\}$ satisfies the conditions of Proposition~\ref{p:genam-eg}. Hence it is a \bac\ Banach algebra. It is also singly generated, by Lemma~\ref{l:singly-generated}. On the other hand, since
\[ \norm{e_{2k}} \geq \norm{b_{2k}} \to \infty \quad\text{ as $k\to\infty$,} \]
$\fA$ is non-amenable, by Corollary~\ref{c:unbdd-idem}.

In particular, suppose that we take $\cH=\ell^2(\Z_+)$, with its standard o.n.~basis $(\delta_n)_{n\geq 0}$, and define $\cH_k = \lin(\delta_0,\dots ,\delta_k)$, so that each $b_{2j}$ is a scalar. Then since each idempotent $e_n$ has finite rank, $\fA\subset \Cpct(\cH)$; and $\fA$ is singly generated, by Lemma~\ref{l:singly-generated}.
Using that lemma, we thus obtain an example of a compact operator on Hilbert space which generates a non-amenable, \bac\ algebra. (We recall that by \cite{Wil_amenop}, a compact operator on Hilbert space generates an amenable algebra if and only if it is similar to a normal compact operator.)

Furthermore, it is clear that $\fA$ is contained in the subalgebra of $\Bdd(\ell^2(\Z_+))$ formed by the block-diagonal matrices of block size~$2$, i.e.~it embeds into the finite, homogeneous von Neumann algebra $\ell^\infty(\Z_+)\otimes \Mat_2$. We finish this section by noting that one can obtain many examples with the same properties, by choosing different sequences $(\cH_n)$ and $(b_{2k})$. It may be interesting to study how these different examples might differ in their finer structure, or share other common features beyond being \bac, singly and compactly generated, etc.
\end{section}

\begin{section}{Biflatness}
The notion of a biflat Banach algebra is due to Helemski{\u\i}. It should be emphasized that it is not an ad hoc weakening of amenability, obtained by randomly omitting certain conditions and ``seeing what happens''. Rather, it is linked to the notion of (homologically) flat Banach modules over a given Banach algebra, which is itself a key concept in Helemski{\u\i}'s versions of Ext and Tor in the Banach algebraic setting. 

That said, we shall not concern ourselves with the deeper homological implications of biflatness, and we will not give the original homological definition. For that, the reader should consult \cite{Hel_HBTA}. Instead, we take the following, well known characterization of biflatness as a working definition.

\begin{lem}[see~{\cite[Exercise VII.2.8]{Hel_HBTA}}]
A Banach algebra $A$ is biflat if and only if there exists a continuous, linear $A$-bimodule map $\sigma: A \to (A\ptp A)^{**}$ such that $\pi^{**}\sigma=\kp_A$, the natural embeddding $A\to A^{**}$.
\end{lem}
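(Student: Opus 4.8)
The plan is to derive this concrete splitting criterion from Helemski{\u\i}'s homological definition using two standard ingredients from his book. The first is the duality theorem that a Banach $A$-bimodule is \emph{flat} if and only if its dual module is \emph{injective}; applied to $A$ regarded as a bimodule over itself, this says that $A$ is biflat precisely when $A^*$ is an injective $A$-bimodule. The second is the characterization of injectivity as a retraction property. I would therefore replace the assertion to be proved by the equivalent assertion that $A^*$ is injective, and then match up the retraction witnessing injectivity with the sought map $\sigma$, the bridge between them being a purely formal transpose correspondence.

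I would isolate that transpose correspondence first, since it is the algebraic heart and is independent of any homology. Note that $\pi$, being a bimodule morphism, induces a bimodule morphism $\pi^*\colon A^*\to (A\ptp A)^*$. Given a bounded $A$-bimodule morphism $\rho\colon (A\ptp A)^*\to A^*$ with $\rho\pi^*=\sid_{A^*}$, set $\sigma=\rho^*\kp_A$; since passing to adjoints sends bimodule morphisms to bimodule morphisms (compatibly with the first Arens products on the biduals), $\sigma\colon A\to(A\ptp A)^{**}$ is a bimodule morphism, and $\pi^{**}\sigma=\pi^{**}\rho^*\kp_A=(\rho\pi^*)^*\kp_A=\kp_A$. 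Conversely, given $\sigma$ as in the statement, set $\rho=\sigma^*\kp_{(A\ptp A)^*}$; using the naturality identity $\kp_{(A\ptp A)^*}\pi^*=\pi^{***}\kp_{A^*}$ together with the standard fact $(\kp_A)^*\kp_{A^*}=\sid_{A^*}$, a short computation gives $\rho\pi^*=(\pi^{**}\sigma)^*\kp_{A^*}=(\kp_A)^*\kp_{A^*}=\sid_{A^*}$. Thus the existence of $\sigma$ is equivalent to $\pi^*$ being a coretraction of $A$-bimodules.

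It then remains to link ``$\pi^*$ is a coretraction'' with injectivity of $A^*$, and here one must be careful: $\pi\colon A\ptp A\to A$ need not admit a bounded linear right inverse, so $\pi^*$ need not be an admissible monomorphism and the definition of injectivity cannot be applied to it directly. I would run the homological step over the unitization instead. The product $\fu\pi\colon\fu{A}\ptp\fu{A}\to\fu{A}$ has the bounded right inverse $a\mapsto\id\tp a$, so $(\fu\pi)^*$ is an admissible monomorphism, while $(\fu{A}\ptp\fu{A})^*$ is cofree and hence injective. For the forward implication, injectivity of $A^*$ splits $(\fu\pi)^*$ by a bimodule retraction, which transposes as above to a bimodule map into $(\fu{A}\ptp\fu{A})^{**}$ splitting $(\fu\pi)^{**}$. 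For the reverse implication, $\sigma$ composed with the bidual of the bimodule inclusion $A\ptp A\hookrightarrow\fu{A}\ptp\fu{A}$ exhibits $A^*$ as a bimodule retract of the injective module $(\fu{A}\ptp\fu{A})^*$, forcing $A^*$ to be injective, i.e.\ $A$ to be biflat.

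The main obstacle I anticipate is precisely the passage between the unital tensor square $\fu{A}\ptp\fu{A}$, where injectivity and admissibility come for free, and the reduced tensor square $A\ptp A$ appearing in the statement. Since $A\ptp A$ is not a bimodule direct summand of $\fu{A}\ptp\fu{A}$, and $A$ may carry no bounded approximate identity, one cannot simply project a splitting defined over $\fu{A}\ptp\fu{A}$ down onto $A\ptp A$. The device that makes the descent work is that $A$ is a closed ideal in $\fu{A}$, so that $a\cdot\xi\cdot b\in A\ptp A$ whenever $a,b\in A$ and $\xi\in\fu{A}\ptp\fu{A}$; exploiting this together with the bimodule-morphism property, while keeping careful track of the Arens-product compatibility of all the adjoints involved, is the one genuinely technical point, and is where I would concentrate the bookkeeping.
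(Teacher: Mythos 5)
First, a point of comparison: the paper does not prove this lemma at all --- it is quoted from Helemski{\u\i}'s book and explicitly adopted as a working definition --- so there is no argument of the author's to measure yours against; your proposal has to stand on its own. Its first half does: the formal equivalence between the existence of $\sigma$ and the existence of a bounded $A$-bimodule morphism $\rho\colon(A\ptp A)^*\to A^*$ with $\rho\pi^*=\sid_{A^*}$, via $\sigma=\rho^*\kp_A$ and $\rho=\sigma^*\kp_{(A\ptp A)^*}$, is correct and cleanly executed, and the overall strategy (flat $\Leftrightarrow$ dual module injective, then a retraction argument over the unitization) is the right one.

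The homological step, however, has a genuine gap in each direction, and in the forward direction the step as literally stated proves too much. A bimodule retraction splitting $(\fu{\pi})^*$, i.e.\ $R\colon(\fu{A}\ptp\fu{A})^*\to(\fu{A})^*$ with $R(\fu{\pi})^*=\sid_{(\fu{A})^*}$, would exhibit $(\fu{A})^*$ as a bimodule retract of the cofree module $(\fu{A}\ptp\fu{A})^*$, i.e.\ it would show that the \emph{unitization} $\fu{A}$ is biflat; since $\fu{A}$ is unital this forces $\fu{A}$, hence $A$, to be amenable --- and the paper's own example ($\ell^1$ with pointwise product) is biflat and non-amenable. Injectivity of $A^*$ can only produce retractions \emph{onto $A^*$}, not onto $(\fu{A})^*$, so the step does not type-check either. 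What injectivity of $A^*$ actually gives you is an extension of the restriction morphism $(\fu{A})^*\to A^*$ along the admissible monomorphism $(\fu{\pi})^*$ (or, equivalently, a retraction of the canonical embedding of $A^*$ into its cofree hull $\Bdd(\fu{A}\ptp\fu{A},A^*)\iso(\fu{A}\ptp A\ptp\fu{A})^*$); transposing that yields a bimodule map $\tilde\sigma\colon A\to(\fu{A}\ptp\fu{A})^{**}$ satisfying only $(\fu{\pi})^{**}\tilde\sigma=\kp_{\fu{A}}\vert_A$, and \emph{then} your ideal device (together with density of $A^2$ in $A$, itself a consequence of biflatness) pushes $\tilde\sigma(A)$ into $(A\ptp A)^{\perp\perp}\iso(A\ptp A)^{**}$. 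The reverse direction has the mirror-image gap: to exhibit $A^*$ as a retract of $(\fu{A}\ptp\fu{A})^*$ you need a bimodule \emph{coretraction} $A^*\to(\fu{A}\ptp\fu{A})^*$ in addition to the retraction $\rho\circ\iota^*$ you obtain from $\sigma$, and the obvious candidate (extend $f\circ\pi$ over $\fu{A}\ptp\fu{A}$ via $\fu{\pi}$ and extension-by-zero on $\id$) is not a bimodule morphism, for precisely the reason you flag in your final paragraph. So you have correctly located the technical crux, but the bookkeeping you defer is not mere bookkeeping: as written, one direction is false and the other is missing its key map.
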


It is easy to see, from this characterization, that:
\begin{YCnum}
\item every amenable Banach algebra is biflat (if $M \subset (A\ptp A)^{**}$ is a virtual diagonal for $A$, define $\sigma(a)=a\cdot M$);
\item a biflat Banach algebra with a bounded approximate identity is amenable
 (if $(u_i)$ is a BAI for $A$, let $M$ be any \wstar-cluster point in $(A\ptp A)^{**}$ of the net $\sigma(u_i)$).
\end{YCnum}

\begin{rem}
Biflat Banach algebras are simplicially trivial, i.e.~if $A$ is biflat then the continuous Hochschild cohomology groups $\cH^n(A,A^*)$ vanish for all $n\geq 1$. In particular, biflat algebras are weakly amenable.
\end{rem}

Let $A$ denote the Banach algebra obtained by equipping $\ell^1$ with pointwise multiplication. $A$~is a standard example of a commutative, semisimple, biflat Banach algebra that has no bounded approximate identity, and hence is non-amenable. Moreover, by Lemma~\ref{l:singly-generated}, it is singly generated as a Banach algebra.

It turns out that there is a continuous algebra homomorphism $\theta:A\to \Bdd(\cH)$ whose range is closed, so that $\theta(A)$ is a singly generated, biflat operator algebra.
Moreover, given such an embedding $\theta$, one can construct an embedding of $A$ as a closed subalgebra of a finite, Type I von Neumann algebra.
(The basic idea is as follows. If $(e_n)$ denotes the standard unit basis of $A=\ell^1$, let $A_n=\lin(e_1,\dots,e_n)$, observe that $\theta\vert_{A_n}$ can be viewed as a representation of $A_n$ on a finite-dimensional Hilbert space, and then take the direct product of the representations $\theta \pi_n$ where $\pi_n:A \to A_n$ is the obvious truncation homomorphism.)

The existence of a map $\theta$ with these properties can be shown by combining the following two results: $A$ is a \dt{$Q$-algebra} in the sense of Varopoulos (proved independently by Davie \cite{Davie_Q-alg} and Varopoulos \cite{Var72_Q-alg}, see also Example~18.3 and Theorem~18.7 in \cite{DieJarTon}); and every $Q$-algebra is isomorphic to some closed subalgebra of $\Bdd(\cH)$ (this is a theorem of Cole; see \cite[\S50]{BonsDunc} or \cite[Theorem 18.8]{DieJarTon}). This approach is somewhat indirect, and does not seem to give an explicit description of an embedding.
It is therefore desirable to have an explicit construction of an embedding of $A$ as a closed subalgebra of a product of matrix algebras.
This can be done with the following construction, which was shown to me by M.~de~la~Salle \cite{MO49788},
 and is included here with his kind permission.
The presentation here is paraphrased slightly from his original wording.
 It seems likely that similar embeddings were known previously, but I was unable to find an explicit description in the literature.

\begin{lem}\label{p:delaSalle}
Consider the Hilbert space $\ell^2(\Nat\cup\{\alpha,\om\})$, where $\alpha$ and $\om$ are formal symbols.
There exists a sequence of rank-one idempotents $(E_n)_{n\geq 1} \subset \Bdd(\ell^2(\Nat\cup\{\alpha,\om\}))$, with the following properties:
\begin{YCnum}
\item $\norm{E_n}\leq 3$ for all $n$;
\item $E_jE_k=0$ whenever $j\neq k$;
\item for each $n$, $\ran(E_n)$ and $\ran(E_n^*)$ are both contained in $\lin(e_\alpha, e_\om, e_n)$;
\item\label{li:Ponting} $\norm{\sum_j a_j E_j}\geq \abs{ \sum_j a_j }$ for any $a\in c_{00}$.
\end{YCnum}
\end{lem}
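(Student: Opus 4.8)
My plan is to realize the idempotents as explicit rank-one operators of the form $E_nx=\ip{x}{\eta_n}\,\xi_n$ ($x\in\cH$, where $\cH=\ell^2(\Nat\cup\{\alpha,\om\})$), determined by a pair of vectors $\xi_n,\eta_n\in\lin(e_\alpha,e_\om,e_n)$. Choosing the vectors inside this three-dimensional span makes property~(iii) automatic, since $\ran(E_n)=\Cplx\xi_n$ and $\ran(E_n^*)=\Cplx\eta_n$, and reduces the whole problem to selecting $\xi_n,\eta_n$ well. I would use the standard rank-one identities $\norm{E_n}=\norm{\xi_n}\,\norm{\eta_n}$, $E_n^2=\ip{\xi_n}{\eta_n}E_n$, and $E_jE_kx=\ip{\xi_k}{\eta_j}\ip{x}{\eta_k}\xi_j$; thus idempotency amounts to $\ip{\xi_n}{\eta_n}=1$, and the orthogonality in~(ii) amounts to $\ip{\xi_k}{\eta_j}=0$ for $j\neq k$. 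The two formal symbols are there precisely to decouple these requirements from the lower bound~(iv): I will make $e_\alpha$ a coordinate common to every range and $e_\om$ a coordinate common to every co-range.

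Concretely I would set $\xi_n=e_\alpha+e_n$ and $\eta_n=e_\om+e_n$. As $e_\alpha,e_\om$ and the $e_n$ are pairwise-orthogonal unit vectors, $\ip{\xi_n}{\eta_n}=\ip{e_n}{e_n}=1$, giving idempotency, while $\norm{\xi_n}=\norm{\eta_n}=\sqrt2$, so $\norm{E_n}=2\le3$ and~(i) holds. For $j\neq k$ each of the four terms of $\ip{\xi_k}{\eta_j}=\ip{e_\alpha+e_k}{e_\om+e_j}$ pairs distinct basis vectors and so vanishes, whence $\ip{\xi_k}{\eta_j}=0$ and~(ii) holds. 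Property~(iii) is then immediate from the choice of $\xi_n,\eta_n$.

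It remains to check the lower bound~(iv), which is the one substantive point. I would test $S=\sum_j a_jE_j$ against the unit vectors $e_\om$ and $e_\alpha$. Since $\ip{e_\om}{\eta_j}=1$ for every $j$, one finds $Se_\om=\sum_j a_j\xi_j=\sum_j a_j(e_\alpha+e_j)$, and then $\ip{Se_\om}{e_\alpha}=\sum_j a_j$ because $\ip{\xi_j}{e_\alpha}=1$ for each $j$. As $\norm{e_\om}=\norm{e_\alpha}=1$, Cauchy--Schwarz yields $\norm{S}\ge\abs{\ip{Se_\om}{e_\alpha}}=\abs{\sum_j a_j}$, which is exactly~(iv).

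I do not expect a serious obstacle once this construction is written down: the only real content is recognizing that a single ``source'' coordinate $e_\alpha$ sitting in every range together with a single ``target'' coordinate $e_\om$ sitting in every co-range simultaneously forces the off-diagonal products to vanish (through orthogonality of the distinct $e_n$) and manufactures the identity $\ip{Se_\om}{e_\alpha}=\sum_j a_j$ needed for~(iv). Everything else is the routine verification of the rank-one identities recorded in the first paragraph.
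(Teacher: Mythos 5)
Your proposal is correct and follows essentially the same route as the paper's proof: rank-one operators $\xi\mapsto\ip{\xi}{x_n}y_n$ with both vectors supported on $\{\alpha,\om,n\}$, orthogonality of products checked via inner products, and the lower bound in (iv) extracted from a fixed matrix coefficient involving $e_\alpha$ and $e_\om$. The only difference is the choice of vectors --- the paper takes $x_n=e_\om+e_\al+e_n$, $y_n=e_\om-e_\al+e_n$ and reads off the $(\om,\om)$ entry, whereas your $\xi_n=e_\al+e_n$, $\eta_n=e_\om+e_n$ read off the $(\al,\om)$ entry and even give the sharper bound $\norm{E_n}=2$.
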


\begin{proof}[Proof (de la Salle, personal communication)]
Let $e_\al,e_\om, e_1,e_2,\dots$ denote the standard basis vectors, and for each $n\in\Nat$ put $x_n = e_\om + e_\al+e_n$, $y_n = e_\om - e_\al + e_n$.
Define $E_n$ by taking $E_n( \xi) = y_n\ip{\xi}{x_n}$. Clearly each $E_n$ is a rank-one operator; direct calculation shows that $E_n^2=E_n$. Properties (i)--(iii) are also easily verified, and~\ref{li:Ponting} follows from observing that $\ip{E_ne_\om}{e_\om} = 1$.
\end{proof}

\paragraph{Constructing the desired embedding.}
Let $\cF$ denote the family of finite, non-empty subsets of $\Nat$.
For each $F\in\cF$ let $\Mat_{F\cup\{\al,\om\}}$
be the algebra of square matrices indexed by $F\cup\{\al,\om\}$, given the usual ($\Cst$-algebra) norm; then if $j \in F$ we can identify $E_j$ and $E_j^*$ with elements of $\Mat_{F\cup\{\al,\om\}}$.

Let $\cM$ be the finite, \typone\ von Neumann algebra $\prod_{F\in\cF} \Mat_{F\cup\{\al,\om\}}$ -- we remark that this has separable predual, since $\cF$ is countable -- and define
$\phi: A \to \cM$ by
\begin{equation}\label{eq:the-embedding}
\phi(\delta_j)_F = \left\{ \begin{aligned} 0 & \quad\text{if $j\notin F$} \\ E_j & \quad\text{if $j\in F$} \end{aligned} \right.
\end{equation}
Clearly $\phi$ is bounded linear with $\norm{\phi}\leq \sup_j \norm{E_j} \leq 3$, and since
\[ \phi(\delta_j)_F\phi(\delta_k)_F = \left\{ \begin{aligned}
\phi(\delta_j) & \quad\text{ if $j=k$,} \\
 0 & \quad\text{ if $j\neq k$,}
 \end{aligned}\right. \]
it follows by continuity that $\phi$ is an algebra homomorphism. To see that $\phi$ is bounded below, we use the estimate
\begin{equation}\label{eq:RCA}\
\sup_{F\in\cF} \abs{ \sum_{j\in F} a_j } \geq \frac{1}{\pi} \norm{a}_1\, .
\end{equation}
(This can be found as \cite[Lemma 6.3]{Rudin_RCA3}. An earlier version of this paper had a weaker estimate, with $4$ instead of $\pi$; my thanks to the referee for providing a reference for the sharper estimate.) We then have, for each $a\in A$,
\[ \begin{aligned}
\norm{\phi(a)}
  = \sup_{F\in\cF} \norm{\phi(a)_F} 
 & = \sup_{F\in\cF} \norm{ \sum_{j\in \Nat} a_j\phi(\delta_j)_F } \\
 & = \sup_{F\in\cF} \norm{ \sum_{j\in F} a_j E_j } \\
 & \geq \sup_{F\in\cF} \abs{\sum_{j\in F} a_j } & \quad\text{(by Proposition~\ref{p:delaSalle}\ref{li:Ponting})} \\
 & \geq \frac{1}{\pi} \norm{a}_1 & \quad\text{(by the inequality \eqref{eq:RCA}).}
 \end{aligned} \]
Thus $\phi$ has closed range, as required.
\end{section}

\begin{section}{An extension result}
As previously mentioned: it has been asked if every amenable commutative subalgebra $\fA\subseteq\Bdd(\cH)$ is isomorphic to a commutative $\Cst$-algebra, which is equivalent to asking if the Gelfand transform $\sG_\fA:\fA\to C_0(\Phi_\fA)$ is an isomorphism of Banach algebras. The answer is positive if $\fA$ is contained in a finite von Neumann algebra $\cM$. A key part of the proof of this in \cite{YC_acoaf} is the following technical result.

\begin{prop}\label{p:acoaf}
Let $\cM$ be a von Neumann algebra with a faithful, finite, normal trace~$\tau$, and let $\fA\subseteq\cM$ be a closed subalgebra which is commutative and amenable. Let $\Phi_\fA$ denote the character space of $\fA$, and let $\sG_\fA:\fA\to C_0(\Phi_\fA)$ denote the Gelfand transform. Then $\sG_{\fA}$ is injective with dense range, and there is a bounded linear map $\theta: C_0(\Phi_\fA) \to L^1(\cM,\tau)$ which extends the inclusion $\fA\to \cM$, in the sense that the diagram
\begin{equation}\label{eq:diagram}
\begin{CD}
\fA & @>{\hbox{inclusion}}>> & \cM \\
@V{\sG_{\fA}}VV & & @VV{\hbox{inclusion}}V \\
C_0(\Phi_\fA) & @>{\theta}>> & L^1(\cM,\tau)
\end{CD}
\end{equation}
commutes.
Here, $L^1(\cM,\tau)$ denotes the completion of $\cM$ with respect to the norm $\norm{x}_{L^1(\tau)}= \sup\{ \abs{\tau(xy)} \st y \in \cM, \norm{y}\leq 1\}$.
\end{prop}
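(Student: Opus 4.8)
The plan is to reduce the entire statement to a single spectral estimate: there is a constant $C>0$ with
\[ \norm{a}_{L^1(\tau)} \le C\,\norm{\sG_\fA(a)}_\infty \qquad (a\in\fA), \]
where $\norm{\sG_\fA(a)}_\infty$ is the spectral radius of $a$ in $\fA$. Granting this, the three assertions follow quickly. \emph{Injectivity} of $\sG_\fA$: if $\sG_\fA(a)=0$ then $\norm{a}_{L^1(\tau)}=0$, and since $\tau$ is faithful the canonical map $\cM\to L^1(\cM,\tau)$ is injective, so $a=0$. \emph{Construction of $\theta$}: on the image $\sG_\fA(\fA)$ set $\theta(\sG_\fA(a))$ equal to the canonical image of $a$ in $L^1(\cM,\tau)$; this is well defined by injectivity and bounded, with constant $C$, by the estimate, so it extends by continuity to all of $C_0(\Phi_\fA)$ once we know $\sG_\fA$ has dense range, and the diagram \eqref{eq:diagram} commutes by construction.

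Dense range I would obtain independently, via Sheinberg's theorem. Since the spectral radius is dominated by the norm, $\sG_\fA:\fA\to C_0(\Phi_\fA)$ is a bounded homomorphism; let $B$ be the uniform closure of its image. Then $B$ is a uniform algebra and $\sG_\fA:\fA\to B$ is a bounded homomorphism with dense range, so $B$ inherits amenability from $\fA$. By Sheinberg's theorem an amenable uniform algebra is self-adjoint, i.e.\ $B=C_0(\Phi_B)$; since every character of $\fA$ is automatically spectrally continuous, the character space $\Phi_B$ is canonically homeomorphic to $\Phi_\fA$, whence $B=C_0(\Phi_\fA)$ and $\sG_\fA$ has dense range. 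Note that this step uses amenability but not the trace.

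For the estimate itself I would pass to the GNS representation of $\cM$ associated with $\tau$, realizing $\cM\subseteq\Bdd(\cH)$ with $\cH=L^2(\cM,\tau)$ and a cyclic, separating trace vector $\xi_\tau$ satisfying $\tau(x)=\ip{x\xi_\tau}{\xi_\tau}$. For $y\in\cM$ one then has $\tau(ay)=\ip{a(y\xi_\tau)}{\xi_\tau}$, where $\norm{y\xi_\tau}_2=\tau(y^*y)^{1/2}$ is bounded by a fixed multiple of $\norm{y}$. Thus $\norm{a}_{L^1(\tau)}=\sup\{\,\abs{\tau(ay)}:\norm{y}\le1\,\}$ is controlled as soon as we bound the matrix coefficients $\ip{a\zeta}{\eta}$ of the representation of $\fA$ by the spectral radius, uniformly in $\zeta,\eta$. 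The target therefore becomes: using amenability, produce for each pair $\zeta,\eta\in\cH$ a representing measure $\mu_{\zeta,\eta}$ on $\Phi_\fA$ with $\ip{a\zeta}{\eta}=\int\sG_\fA(a)\,d\mu_{\zeta,\eta}$ and total variation $\norm{\mu_{\zeta,\eta}}\le C\norm{\zeta}\,\norm{\eta}$; the estimate then reads off by taking $\zeta=y\xi_\tau$, $\eta=\xi_\tau$.

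The main obstacle is exactly the construction of these uniformly bounded representing measures, and it is here that amenability must be combined with the \emph{finiteness} of the trace. One cannot shortcut it by invoking a similarity-to-a-$\Cst$-algebra result: that is equivalent to the conclusion being built towards and is not available in general. Nor is the estimate a feature of the ambient finite von Neumann algebra alone --- in $\Mat_2$ with its tracial state a nilpotent Jordan block has spectral radius $0$ but nonzero $L^1$-norm --- so it is genuinely the interaction of the amenability of $\fA$ with the trace that drives the argument. Concretely I would feed a bounded approximate diagonal for $\fA$ (equivalently, Gifford's averaging operator from the proof of Theorem~\ref{t:Gifford-trick}, which already packages amenability into a bounded $\fA'$-bimodule projection) into the matrix coefficient $\ip{a\zeta}{\eta}$, and use the trace to convert the resulting averages into a bounded net of measures on $\Phi_\fA$, extracting a weak$^*$ cluster point as $\mu_{\zeta,\eta}$. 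Bounding the total variation uniformly is the crux, and is where the technical weight of \cite{YC_acoaf} is concentrated.
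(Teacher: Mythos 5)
A preliminary remark: this paper does not actually prove Proposition~\ref{p:acoaf} --- it is quoted verbatim from \cite{YC_acoaf} as a known technical result, so there is no in-paper argument to measure you against. Judged on its own terms, your top-level architecture is sound and matches the known strategy: dense range of $\sG_\fA$ via Sheinberg's theorem applied to the uniform closure of $\sG_\fA(\fA)$ (together with the standard identification of the character spaces); reduction of both injectivity and the existence of $\theta$ to a single estimate of the form $\norm{a}_{L^1(\tau)}\leq C\norm{\sG_\fA(a)}_\infty$; and injectivity from faithfulness of $\tau$. You are also right that one cannot simply invoke a similarity theorem, and your $\Mat_2$ nilpotent example correctly shows the estimate is not a property of $(\cM,\tau)$ alone.

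The genuine gap is that the estimate itself --- which is the entire content of the proposition --- is not proved, and the mechanism you sketch does not obviously produce it. Writing $\tau(ay)=\ip{a(y\xi_\tau)}{\xi_\tau}$ and asking for representing measures $\mu_{\zeta,\eta}$ on $\Phi_\fA$ with uniformly bounded total variation is a correct reformulation, but ``feeding a bounded approximate diagonal $\sum_j a_j\tp b_j$ into the matrix coefficient'' and estimating term by term only ever yields bounds of the form $\sum_j\norm{a\, b_j\zeta}\,\norm{a_j^*\eta}\leq K\norm{a}\,\norm{\zeta}\,\norm{\eta}$, i.e.\ bounds in the \emph{operator norm} of $a$, not its spectral radius. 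Nothing in your sketch explains how $\norm{\sG_\fA(a)}_\infty$ enters: that requires either a power/iteration argument, or extracting positivity from the trace (Cauchy--Schwarz for $\tau$ applied to suitably averaged positive elements) so that a Grothendieck- or representing-measure-type factorization over $C_0(\Phi_\fA)$ becomes available. This is precisely the technical core of \cite{YC_acoaf}, and you explicitly defer it (``bounding the total variation uniformly is the crux''). Note also that uniformly bounded representing measures for \emph{all} matrix coefficients would give $\norm{a}\leq C\norm{\sG_\fA(a)}_\infty$, which is the full similarity theorem rather than this intermediate $L^1$ statement; so as formulated your reduction aims at something at least as hard as the theorem the proposition is designed to help prove. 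As it stands, the proposal is a correct reduction plus an honest acknowledgment that the decisive step is missing, which is not yet a proof.
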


One might wonder if the converse of this proposition holds, perhaps with the addition of some side-conditions on $\fA$. More precisely:

\paragraph{Question.} Let $\cM$ be as in Proposition~\ref{p:acoaf}, and let $\fA\subseteq \cM$ be a closed subalgebra, such that
\begin{YCnum}
\item $\sG_\fA:\fA\to C_0(\Phi_\fA)$ is injective with dense range;
\item there exists a bounded linear map $\theta: C_0(\Phi_\fA)\to L^1(\cM,\tau)$ making the diagram \eqref{eq:diagram} commute.
\end{YCnum}
If $\fA$ furthermore satisfies some weak version of amenability, must it automatically be amenable?

\medskip
If we take ``weak version of amenability'' to be biflatness (and therefore, anything strictly weaker than biflatness), then the answer is negative. Our example will be the algebra $\fA$ obtained as an embedding of $\ell^1$.
Let $\phi$ be the embedding constructed in \eqref{eq:the-embedding}, and let $\sG_A: A \to c_0(\Nat)$ be the Gelfand transform (which is just the usual injection from $\ell^1(\Nat)$ into $c_0(\Nat)$.)

\begin{prop}\label{p:extension}
Let $\tau$ be a faithful, normal, finite, tracial state $\tau$ on $\cM$.
Then
\begin{equation}\label{eq:Monty}
\norm{\phi(a)}_{L^1(\tau)} \leq 3 \norm{a}_\infty \quad\text{for all $a\in \ell^1(\Nat)$.}
\end{equation}
Hence there is a bounded linear map $\theta: c_0(\Nat) \to L^1(\tau)$ which makes the diagram \eqref{eq:diagram} commute.
\end{prop}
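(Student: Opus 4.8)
The plan is to reduce the inequality \eqref{eq:Monty} to a single uniform estimate on each matrix block of $\cM$, where it follows at once from the fact that the $E_j$ are rank-one operators of norm at most $3$. Write $m_F=\abs F+2$ for the dimension of $\ell^2(F\cup\{\al,\om\})$, and let $\tr_F$ denote the unnormalized trace on $\Mat_{F\cup\{\al,\om\}}$. First I would record that, since $\cM=\prod_{F\in\cF}\Mat_{F\cup\{\al,\om\}}$ is a von Neumann-algebraic product of full matrix algebras, every normal tracial state has the form $\tau=\sum_{F\in\cF}\frac{w_F}{m_F}\tr_F$ with $w_F\geq 0$ and $\sum_F w_F=1$: a normal state is given by a density in the predual, being tracial forces that density to be central, hence a scalar on each block, and the state normalization fixes the scalars up to the weights $w_F$. (Faithfulness would give $w_F>0$, but this is not needed below.)

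Next, fix $a\in\ell^1(\Nat)$ and $F\in\cF$, and consider the block $\phi(a)_F=\sum_{j\in F}a_j E_j$. Since each $E_j$ has rank one, its trace-class norm coincides with its operator norm, which is at most $3$ by Lemma~\ref{p:delaSalle}\,(i); the triangle inequality therefore gives
\[
\Norm{\phi(a)_F}_{S^1}\;\leq\;\sum_{j\in F}\abs{a_j}\,\Norm{E_j}_{S^1}\;\leq\;3\sum_{j\in F}\abs{a_j}\;\leq\;3\,\abs F\,\norm{a}_\infty.
\]
The point of working block by block is that this crude $\ell^1$-type bound carries the factor $\abs F$, which is then absorbed by the division by $m_F=\abs F+2$ when one passes to the normalized trace, since $\abs F/(\abs F+2)\leq 1$.

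Finally I would assemble these estimates using the trace duality $\abs{\tr_F(xy)}\leq\Norm{x}_{S^1}\,\norm{y}$ on each block. For $y\in\cM$ with $\norm y\leq 1$ one has $\norm{y_F}\leq 1$ for every $F$, so
\[
\abs{\tau(\phi(a)\,y)}\;\leq\;\sum_{F\in\cF}\frac{w_F}{m_F}\,\Norm{\phi(a)_F}_{S^1}\,\norm{y_F}\;\leq\;\sum_{F\in\cF}w_F\,\frac{3\abs F}{\abs F+2}\,\norm{a}_\infty\;\leq\;3\,\norm{a}_\infty;
\]
taking the supremum over all such $y$ yields \eqref{eq:Monty}. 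For the concluding statement, \eqref{eq:Monty} says precisely that $\norm{\phi(a)}_{L^1(\tau)}\leq 3\,\norm{\sG_A(a)}_\infty$, so the assignment $\sG_A(a)\mapsto\phi(a)$ is well-defined (as $\sG_A$ is injective) and bounded on the dense subspace $\sG_A(\ell^1)\subseteq c_0(\Nat)$; it extends uniquely to a bounded linear $\theta:c_0(\Nat)\to L^1(\tau)$, and $\theta$ makes \eqref{eq:diagram} commute by construction.

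The argument presents no real analytic difficulty; the only step needing care is the first one, namely pinning down the explicit form of $\tau$ and arranging the block decomposition so that the $\ell^1$-bound $3\abs F\,\norm a_\infty$ is exactly compensated by the block dimension $m_F$. The remaining steps are just the triangle inequality for the trace-class norm and trace--operator-norm duality.
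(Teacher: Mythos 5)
Your proof is correct and follows essentially the same route as the paper: identify the normal tracial state block by block as a convex combination of normalized matrix traces, bound $\snorm{\phi(a)_F}$ by $3\abs{F}\norm{a}_\infty$ using that each $E_j$ is rank one with $\snorm{E_j}=\norm{E_j}\leq 3$, and absorb the factor $\abs{F}$ into the normalization $\abs{F}+2$. The only cosmetic difference is that you recover $\norm{\cdot}_{L^1(\tau)}$ via trace--operator-norm duality, whereas the paper writes down the resulting formula $\norm{y}_{L^1(\tau)}=\sum_F \frac{\lambda_F}{\abs{F}+2}\snorm{y_F}$ directly and then estimates.
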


\begin{proof}
By normality, any such trace $\tau$ is determined by its values on each summand $\Mat_{F\cup\{\al,\om\}}$, and therefore must have the form
\[ \tau(x) = \sum_{F\in\cF} \lambda_F \tr(x_F) \qquad(x\in\cM), \]
where the family $(\lambda_F)_{F\in\cF}$ is strictly positive and absolutely summable with $\sum_{F\in\cF}\lambda_F=1$, and where $\tr$ denotes the \emph{normalized} trace on $\Mat_{F\cup\{\al,\om\}}$. Hence 
\[ \norm{y}_{L^1(\tau)} = \sum_{F\in\cF} \frac{\lambda_F}{\abs{F}+2}\snorm{y_F} \qquad(y\in\cM), \] 
where $\snorm{\cdot}$ denotes the usual, \emph{unnormalized} Schatten $1$-norm on matrices.

It suffices to prove the inequality \eqref{eq:Monty}; the rest is routine.
So, let $a\in c_{00}(\Nat)$. Then
\[ \begin{aligned}
\norm{\phi(a)}_{L^1(\tau)}
  = \sum_{F\in\cF} \frac{\lambda_F}{\abs{F}+2} \snorm{\phi(a)_F} 
 & \leq \sup_{F\in\cF} \frac{1}{\abs{F}+2} \snorm{\phi(a)_F} \\
 & = \sup_{F\in\cF} \frac{1}{\abs{F}+2} \sNorm{\sum\nolimits_{j\in F} a_j E_j} \\
 & \leq \norm{a}_\infty \sup_{F\in\cF} \frac{1}{\abs{F}+2} \sum_{j\in F} \snorm{E_j}
\end{aligned} \] 
Since each $E_j$ is a rank-one operator, $\snorm{E_j}=\norm{E_j}\leq 3$, and the desired inequality \eqref{eq:Monty} now follows.
\end{proof}
\end{section}

\subsection*{Acknowledgments}
The construction in Proposition~\ref{p:extension}, and some of the results from~\cite{YC_acoaf}, formed part of a lecture presented at the conference
{\it Operator theory and applications}, held at Chalmers University, 26th--29th April 2011, in honour of V. S. Shulman. The author thanks the conference organizers for an enjoyable and stimulating meeting.
He would also like to thank Mikael de la Salle and Jesse Peterson for useful discussions on the {\it MathOverflow}\/ website.

Finally, the author thanks the anonymous referee for his or her suggestions and corrections: in particular, the requests for clarification of the original version of Section~\ref{s:prelim}, which led to the repair and improvement of some incomplete arguments.

The work here was partially supported by NSERC Discovery Grant 402153-2011.

\newcommand{\rmand}{{\rm and}}
\newcommand{\hurl}[1]{\url{#1}}

\vfill\noindent\contact

\noindent
Email: \texttt{choi@math.usask.ca}

\end{document}